\newtheorem{theorem}{Theorem}
\newtheorem{lemma}{Lemma}
\newtheorem{proposition}{Proposition}
\def\OK{\mathcal{O}_K}
\def\Q{\mathbb{Q}}
\begin{document}

%\title{Counting Eisenstein polynomials satisfying a condition from genus theory}
%\author{Jongwoo Choi, Kevin McGown}

\begin{center}
\uppercase{\bf Counting Eisenstein polynomials satisfying a condition from genus theory}
\vskip 20pt
{\bf Jongwoo Choi and Kevin J. McGown}\\
June, 2020
\end{center}

\centerline{\bf Abstract}
\noindent
We give an asymptotic formula for the number of monic Eisenstein polynomials of odd prime degree satisfying
an additional condition that arises in the study of the genus number of an algebraic number field.

\section{Introduction}

Let $d$ denote an odd prime throughout.
Our main theorem concerns counting the proportion of
Eisenstein polynomials of degree $d$ satisfying a
specific condition.
Before commencing with the statement of our result,
it is perhaps useful to comment on where this condition arises.

%Unless otherwise stated, $d$ will always denote an odd prime.
Let $K$ be an algebraic number field of degree $d$.
One important invariant associated to $K$ is the so-called
genus number of $K$, denoted $g_K$;
in our setting $g_K$ is a divisor of the class number $h_K$.
It is very natural to ask about
the statistical distribution of $g_K$ as one runs through all number fields
of a given degree.  More specifically, one could ask what proportion
of such fields have $g_K=1$.   Asymptotically, cyclic fields constitute zero percent
of all fields of degree $d$, so we may assume our field $K$ to be non-cyclic.

In~\cite{MT}, A.~Tucker and  the second author establish the precise proportion of
cubic fields with genus number one; establishing a similar result for $d=5$
is work in progress.
However, when $d>5$, counting number fields of degree $d$
is an open problem.
%In the interest of doing something when $d>5$, we will instead focus
%on the polynomials that generate the fields.

It is well-known that
one can choose $\alpha\in\OK$ with $K=\Q(\alpha)$
such that the minimal polynomial $f(x)$ of $\alpha$ is $p$-Eisenstein
iff $p$ is totally ramified in $K$.
%(This statement still holds even when there are no totally ramified primes.)
For example, see Ch.~2 of~\cite{Ishida} for the precise recipe.
Provided the generating polynomial $f(x)$ is chosen in this way,
a theorem of Ishida gives a precise method for determining the genus number of $K$;
in particular, this allows one to decide whether $K$ has genus number one
just by looking at $p$-divisibility of the polynomial.

Write
\[
f(x)=x^d+a_{d-1}x^{d-1}+\dots +a_1 x+a_0
\]
and let $P$ denote the set of primes $p$ for which $f(x)$ is $p$-Eisenstein.
Then $g_K>1$ if and only if $p\equiv 1\pmod{d}$ for some $p\in P$
\emph{or}
\[
  d\in P \;\text{ and }\; a_2\equiv\dots\equiv a_{d-1}\equiv a_1+a_d\equiv 0\pmod{d^2}
  \,.
\]
Putting aside the question of counting fields,
one could ask, what proportion
of Eisenstein polynomials $f(x)$ of degree $d$ fail to satisfy this condition?
For brevity, let us denote the negation of this condition by $(\star)$
so that, in this context, $g_K=1$ iff condition $(\star)$ holds.

Let $\mathcal{E}_d(H)$ denote the collection of all monic Eisenstein polynomials
of height at most $H$ and
let $\mathcal{E}_d^{\star} (H)$ denote the collection of all
$f \in \mathcal{E}_d(H)$
such that $f$ satisfies condition $(\star)$.
Building on work of Dubickas~\cite{D}, Heymann and Shparlinski~\cite{HS} prove that
\begin{equation}\label{E:HS}
  \#\mathcal{E}_d(H)=\theta_d (2H)^d+
  \begin{cases}
   O(H^{d-1}) &\text{ if $d>2$ }\\
   O(H(\log H)^2) &\text{ if $d=2$}\\ 
  \end{cases}
\end{equation}
where
\[
  \theta_d=1-\prod_p
  \left(1-\frac{p-1}{p^{d+1}}\right)
  \,.
\]
Along the same lines,
we obtain the following  asymptotic:

\begin{theorem}\label{T:main}
\[
  \#\mathcal{E}_d^{\star} (H)=\theta^{\star}_d(2H)^d +
  \begin{cases}
   O(H^{d-1}) & \text{ if $d>2$ }\\
   O(H(\log H)^2) & \text{ if $d=2$ }\\ 
  \end{cases}
\]
where
\[
  \theta^{\star}_d = 1-\frac{d-1}{d^{2d}}-\left(1-\frac{(d-1)(d^{d-1}+1)}{d^{2d}}\right) \prod_{\substack{p \neq d\\ p \not\equiv 1 (d)}} \left(1-\frac{p-1}{p^{d+1}}\right)
  \,.
\]
In particular,
\[
  \lim_{H\to\infty}\frac{ \#\mathcal{E}_d^{\star} (H)}{\#\mathcal{E}_d(H)}=\frac{\theta^{\star}_d}{\theta_d}
  \,.
\]
\end{theorem}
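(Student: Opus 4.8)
The plan is to adapt the inclusion--exclusion argument behind \eqref{E:HS}, due to Heymann and Shparlinski \cite{HS} and ultimately to Dubickas \cite{D}, so that it accommodates condition $(\star)$. For a finite set $S$ of primes let $N_S(H)$ be the number of monic degree-$d$ polynomials of height at most $H$ that are $p$-Eisenstein for every $p\in S$. Reducing the coefficients modulo $\prod_{p\in S}p^2$ and using the Chinese Remainder Theorem, one gets
\[
  N_S(H)=(2H)^d\prod_{p\in S}\frac{p-1}{p^{d+1}}+O_S(H^{d-1}),
\]
and $N_S(H)=0$ once $\prod_{p\in S}p>H$, because an Eisenstein polynomial has $a_0\ne 0$ with $\prod_{p\in S}p\mid a_0$. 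Since $\mathcal{E}_d(H)$ is the union over primes $p$ of the sets counted by $N_{\{p\}}(H)$, inclusion--exclusion over $S$ together with a truncation of the resulting convergent product over all primes recovers $\theta_d$; it is this assembly that produces the $(\log H)^2$ loss when $d=2$.

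To treat $\mathcal{E}_d^\star(H)$ I would first unwind $(\star)$. Writing $P=P(f)$ for the set of primes for which $f$ is $p$-Eisenstein, condition $(\star)$ asks that no prime $p\equiv 1\pmod d$ lie in $P$, and that it not happen that $d\in P$ while $a_2\equiv\cdots\equiv a_{d-1}\equiv a_1+a_d\equiv 0\pmod{d^2}$. The first clause removes the primes $p\equiv 1\pmod d$ from the pool of admissible Eisenstein primes; the second alters the local count at the single prime $d$. I would therefore split $\mathcal{E}_d^\star(H)$ according to whether or not $d\in P$ and, in each case, run the inclusion--exclusion over finite sets $S$ of \emph{admissible} primes, meaning $p\ne d$ with $p\not\equiv 1\pmod d$. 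When $d\notin P$ one is counting $f$ that are $p$-Eisenstein for some admissible $p$, for no inadmissible $p$, and not for $d$: this is the Heymann--Shparlinski computation with the Euler product restricted to admissible primes and the extra requirement that $f$ not be $d$-Eisenstein. When $d\in P$ the prime $d$ is forced into every $S$, but its local contribution must be recomputed as the number of $f$ that are $d$-Eisenstein \emph{and} fail the congruences $a_2\equiv\cdots\equiv a_{d-1}\equiv a_1+a_d\equiv 0\pmod{d^2}$; this can be done by a short secondary inclusion--exclusion, or directly by counting residues modulo $d^2$. Summing the two cases and simplifying the Euler products should reproduce the stated $\theta_d^\star$.

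The substantive work, exactly as in \cite{HS}, lies in the error analysis accompanying the truncation: one cuts the inclusion--exclusion at primes $p\le T$ for a parameter $T=T(H)$, bounds the number of Eisenstein polynomials of height $\le H$ having \emph{some} Eisenstein prime exceeding $T$, and bounds the combined contribution of the sets $S$ with $|S|\ge 2$ --- here the inequality $\prod_{p\in S}p\le H$ pins down both the size of $S$ and the size of $\prod_{p\in S}p$. Balancing these against the truncation error and choosing $T$ optimally returns $O(H^{d-1})$ for $d>2$ and $O(H(\log H)^2)$ for $d=2$. Peeling off the single prime $d$, with its modified count, and the convergent subproduct over primes $p\equiv 1\pmod d$ does not harm any of these bounds, and the limiting proportion $\theta_d^\star/\theta_d$ is then immediate on dividing by \eqref{E:HS}.

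The step I expect to be the real obstacle is pushing the Heymann--Shparlinski error bookkeeping through the modified sieve uniformly in $T$: one must check that the secondary modulo-$d^2$ sieve at $d$ and the restriction to admissible primes interact cleanly with the ``large Eisenstein prime'' and ``many Eisenstein primes'' tail estimates, so that the final error term is genuinely of the quality in \eqref{E:HS}. By comparison, extracting the closed form $\theta_d^\star$ once the error analysis is in place is a routine, if somewhat lengthy, rearrangement of Euler products.
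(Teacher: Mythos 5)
Your overall strategy---an inclusion--exclusion over squarefree moduli in the style of Dubickas and Heyman--Shparlinski, with a modified local count at the prime $d$---is the same family of argument the paper uses, and your error-analysis outline (uniform local counts, truncation at $\prod_{p\in S}p\le H$) is plausible: the paper gets uniformity from Lemma~\ref{L:G} of \cite{HS} and from the new Lemma~\ref{L:G1}, whose error term $O\bigl(H^{d-1}2^{\omega(s)}/s^{d-1}\bigr)$ is summable. But the paper organizes the count differently: it writes $\#\mathcal{E}_d^{\star}(H)=\#\mathcal{E}_d(H)-\#\mathcal{E}_d^{(1)}(H)-\#\mathcal{E}_d^{(2)}(H)$ and evaluates the two subtracted quantities by M\"obius sums $-\sum\mu(s)\#\mathcal{G}_d(s,H)$ and $-\sum\mu(s)\#\mathcal{G}'_d(s,H)$ in which the moduli $s$ are restricted to the sets $\mathcal{S}$ (squarefree, divisible by some prime $\equiv 1\pmod d$) and $\mathcal{S}'$ (squarefree products of primes $\not\equiv 1\pmod d$, coprime to $d$); only positive conditions are ever imposed inside a local count (Lemma~\ref{L:G1} imposes ``$d$-Eisenstein and the congruences mod $d^2$''), and the constant emerges as $\theta_d^\star=\theta_d-\alpha_d-\beta_d$.

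The genuine gap in your proposal is the final claim that ``simplifying the Euler products should reproduce the stated $\theta_d^\star$'': with your decomposition it does not, and this is precisely the non-routine part. Write $q_p=\frac{p-1}{p^{d+1}}$, $r_d=\frac{d-1}{d^{2d}}$, $\Pi_1=\prod_{p\equiv 1\,(d)}(1-q_p)$, $\Pi_2=\prod_{p\ne d,\ p\not\equiv 1\,(d)}(1-q_p)$. Your case $d\notin P$ (Eisenstein at some admissible prime, at no prime $\equiv 1\pmod d$, not at $d$) has main-term density $(1-q_d)\Pi_1(1-\Pi_2)$, and your case $d\in P$ ($d$-Eisenstein, congruences failing, no Eisenstein prime $\equiv 1\pmod d$) has density $(q_d-r_d)\Pi_1$; the total is $\Pi_1\bigl(1-r_d-(1-q_d)\Pi_2\bigr)$, which is not the stated $\theta_d^\star=1-r_d-(1-q_d-r_d)\Pi_2$ (no factor $\Pi_1$ appears in $\theta_d^\star$; numerically for $d=3$ the two constants are about $0.089$ and $0.092$). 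The stated constant is exactly $\theta_d-\alpha_d-\beta_d$, where the restricted M\"obius sums assign weight $1$ to a polynomial precisely when \emph{all} of its Eisenstein primes are $\equiv 1\pmod d$ (this yields $\alpha_d$), respectively when it is $d$-Eisenstein with the congruences and has at least one Eisenstein prime $p\ne d$, $p\not\equiv 1\pmod d$ (this yields $\beta_d$). So before any truncation bookkeeping you must reconcile your reading of $(\star)$ with the decomposition that the theorem's constant actually encodes; as written, your plan terminates in a formula with a different main-term constant, and no amount of error analysis repairs that. A secondary point: restricting the sieve moduli to admissible primes only drops the requirement of Eisenstein-ness at the excluded primes, it does not \emph{forbid} it, so the negative conditions ``not $d$-Eisenstein'' and ``no Eisenstein prime $\equiv 1\pmod d$'' in your two cases require their own inclusion--exclusion layer --- the step the paper sidesteps by subtracting counts in which only positive conditions occur.
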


As a consequence, the answer to our question is given explicitly by the ratio $\theta_d^\star/\theta_d$.
When $d=3$, we have $\theta^\star_3/\theta_3\approx 0.9681192$, and the sequence of these ratios
tend to $1$ as $d\to\infty$.
%increases to one.
We emphasize that counting the polynomials is different than counting the fields,
but we found determining a closed form for the ratio
$\theta^{\star}_d/\theta_d $
an interesting problem nonetheless.

\section{Preliminary Lemmas}

We define
\[
  \varphi(s,H)=\sum_{\substack{|a|\leq H\\\gcd(a,s)=1}}1
  \,.
\]
\begin{lemma}\label{L:Dubickas}
For any integer $s\geq 1$, we have
\[
  \varphi(s,H)=\frac{2H\varphi(s)}{s}+O\left(2^{\omega(s)}\right)
  \,.
\]
\end{lemma}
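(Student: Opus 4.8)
The plan is to detect the coprimality condition by Möbius inversion and then estimate the resulting divisor sum termwise. Writing $[\gcd(a,s)=1]=\sum_{e\mid\gcd(a,s)}\mu(e)$ and interchanging the order of summation, I would obtain
\[
  \varphi(s,H)=\sum_{e\mid s}\mu(e)\sum_{\substack{|a|\leq H\\ e\mid a}}1=\sum_{e\mid s}\mu(e)\left(2\left\lfloor\frac{H}{e}\right\rfloor+1\right),
\]
since the multiples of $e$ lying in $[-H,H]$ are exactly $0,\pm e,\pm 2e,\dots,\pm e\lfloor H/e\rfloor$.

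Next I would replace $2\lfloor H/e\rfloor+1$ by $2H/e+O(1)$ and split the sum into a main term and an error term. The main term is $2H\sum_{e\mid s}\mu(e)/e$, and the standard multiplicative identity $\sum_{e\mid s}\mu(e)/e=\prod_{p\mid s}(1-1/p)=\varphi(s)/s$ produces the claimed leading term $2H\varphi(s)/s$. For the error term, only squarefree $e$ contribute (because of the factor $\mu(e)$), and each contributes $O(1)$; since the number of squarefree divisors of $s$ is exactly $2^{\omega(s)}$, the total error is $O(2^{\omega(s)})$.

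The argument is entirely routine; the only point requiring any care is that the implied constant in $2\lfloor H/e\rfloor+1=2H/e+O(1)$ is absolute (it is at most $1$), so that summing over the $2^{\omega(s)}$ divisors genuinely yields a bound $O(2^{\omega(s)})$ with no hidden dependence on $s$ or $H$. There is thus no real obstacle here — this lemma is the quantitative sieving input that will later be fed, one coefficient at a time, into the count of Eisenstein polynomials.
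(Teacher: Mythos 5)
Your argument is correct and is the standard M\"obius-sieve proof: the paper itself does not reprove this lemma but simply cites Lemma~4 of Heyman--Shparlinski, and your computation (detecting $\gcd(a,s)=1$ via $\sum_{e\mid\gcd(a,s)}\mu(e)$, counting $2\lfloor H/e\rfloor+1$ multiples, and bounding the error by the number $2^{\omega(s)}$ of squarefree divisors) is precisely the expected argument behind that citation. Nothing further is needed.
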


\begin{proof}
See Lemma~4 of~\cite{HS}.
\end{proof}

Let $\mathcal{G}_d(s,H)$ be the set of degree $d$ monic polynomials of height at most $H$ satisfying:
\begin{enumerate}
\item
$s\mid a_i$ for $i=0,\dots,d-1$,
\item
$\gcd(a_0/s,s)=1$.
\end{enumerate}

\begin{lemma}\label{L:G}
For $s\leq H$, we have
\[
  \#\mathcal{G}_d(s,H) = \frac{2^{d}H^{d}\varphi(s)}{s^{d+1}} + O\left(\frac{H^{d-1} 2^{\omega(s)}}{s^{d-1}}\right)
  .
\]
\end{lemma}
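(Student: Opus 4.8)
The plan is to reduce the count to a product of two one–dimensional counts and then invoke Lemma~\ref{L:Dubickas}. A polynomial in $\mathcal{G}_d(s,H)$ is determined by its coefficient vector $(a_0,\dots,a_{d-1})$ (the polynomial being monic): condition~(1) forces each $a_i$ to be a multiple of $s$ lying in $[-H,H]$, and condition~(2) further requires $a_0/s$ to be coprime to $s$. Writing $a_0=sb_0$, the admissible $b_0$ are precisely the integers with $|b_0|\le H/s$ and $\gcd(b_0,s)=1$, while for $1\le i\le d-1$ the coefficient $a_i$ runs over all multiples of $s$ in $[-H,H]$. Since these choices are independent,
\[
  \#\mathcal{G}_d(s,H)=\varphi\!\left(s,\tfrac{H}{s}\right)\bigl(2\lfloor H/s\rfloor+1\bigr)^{d-1}.
\]

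First I would estimate the two factors separately. Counting multiples of $s$ in $[-H,H]$ gives $2\lfloor H/s\rfloor+1=\tfrac{2H}{s}+O(1)$, and applying Lemma~\ref{L:Dubickas} with $H/s$ in place of $H$ gives
\[
  \varphi\!\left(s,\tfrac{H}{s}\right)=\frac{2H\varphi(s)}{s^{2}}+O\bigl(2^{\omega(s)}\bigr).
\]
This use of Lemma~\ref{L:Dubickas} at a generally non–integral argument is harmless, since that lemma rests on the inclusion–exclusion identity $\varphi(s,x)=\sum_{e\mid s}\mu(e)\bigl(2\lfloor x/e\rfloor+1\bigr)$, which holds for every real $x\ge 0$.

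Next I would substitute these estimates into the product above and expand; this is the step where the hypothesis $s\le H$ enters, in the form $H/s\ge 1$. Expanding $\bigl(\tfrac{2H}{s}+O(1)\bigr)^{d-1}$ binomially, every term after the leading one is $O\bigl((H/s)^{d-1-k}\bigr)$ with $1\le k\le d-1$, hence is $O\bigl((H/s)^{d-2}\bigr)$, so
\[
  \bigl(2\lfloor H/s\rfloor+1\bigr)^{d-1}=\left(\frac{2H}{s}\right)^{d-1}+O\!\left(\left(\frac{H}{s}\right)^{d-2}\right).
\]
Multiplying by $\tfrac{2H\varphi(s)}{s^{2}}+O\bigl(2^{\omega(s)}\bigr)$ produces four terms: the product of the two leading terms is $\tfrac{2^{d}H^{d}\varphi(s)}{s^{d+1}}$, exactly the claimed main term, and each of the remaining three is bounded by a constant times $2^{\omega(s)}(H/s)^{d-1}=2^{\omega(s)}H^{d-1}/s^{d-1}$ once one uses $\varphi(s)\le s$, $2^{\omega(s)}\ge 1$, and $H/s\ge 1$. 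Collecting these estimates yields the lemma.

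I do not anticipate a genuine obstacle here: after writing the count as a product of two one–dimensional counts, the argument is just careful bookkeeping of error terms, and the only point demanding attention is to invoke $s\le H$ wherever needed so that no error term exceeds the size $O\bigl(2^{\omega(s)}H^{d-1}/s^{d-1}\bigr)$.
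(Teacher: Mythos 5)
Your proof is correct. The paper itself gives no argument for this lemma beyond citing Lemma~3 of~\cite{HS}, and your proof is essentially the standard one used there (and mirrored in the paper's own proof of Lemma~\ref{L:G1}): factor the count as $\varphi\left(s,\tfrac{H}{s}\right)\left(2\lfloor H/s\rfloor+1\right)^{d-1}$, estimate the first factor via Lemma~\ref{L:Dubickas} and the second trivially, and multiply out, using $s\le H$, $\varphi(s)\le s$, and $2^{\omega(s)}\ge 1$ to absorb all error terms into $O\left(2^{\omega(s)}H^{d-1}/s^{d-1}\right)$.
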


\begin{proof}
See Lemma~3 of~\cite{HS}.
\end{proof}

%\begin{proof}
%Assume $s \le H$. For every $i = 1, 2, ... , d-1$, the number of possibilities for each $a_i$ is equal to 
%\[
%  2 \left\lfloor\frac{H}{s}\right\rfloor + 1 = \frac{2H}{s} + O\left(1\right).
%\]
%
%We can take $a_0 = sm$ with an integer $m$ satisfying $|m| \le H/s$ and \mbox{$\gcd(m,s)=1$.} 
%Using Lemma \ref{L:Dubickas}, the number of possibilities for $a_0$ is
%\[
%  \varphi\left(s, \frac{H}{s}\right) = \frac{2H\varphi(s)}{s^2} + O\left(2^{\omega(s)}\right). 
%\]
%
%Therefore,
%\begin{align*}
%  \#\mathcal{G}_d(s,H) =& \left(\frac{2H}{s} + O\left(1\right)\right)^{d-1} \left(\frac{2H\varphi(s)}{s^2} + O\left(2^{\omega(s)}\right)\right) \\ 
%                                  =& \left(\left(\frac{2H}{s}\right)^{d-1} + O\left(\left(\frac{H}{s}\right)^{d-2}\right)\right)  \left(\frac{2H\varphi(s)}{s^2} + O\left(2^{\omega(s)}\right)\right) \\
%						  =& \frac{2^{d}H^{d}\varphi(s)}{s^{d+1}} + O\left(\frac{H^{d-1} 2^{\omega(s)}}{s^{d-1}}\right)
%						  .
%\end{align*}
%
%\end{proof}

Let $\mathcal{G}'_d(s,H)$ be the set of degree $d$ monic polynomials $f(x)$ of height at most $H$ satisfying:
\begin{enumerate}
\item
$s\mid a_i$ for $i=0,\dots,d-1$,
\item
$\gcd(a_0/s,s)=1$,
\item
$f(x)$ is Eisenstein at $d$,
\item
$ a_1\equiv\dots\equiv a_{d-2}\equiv a_0+a_{d-1}\equiv 0\pmod{d^2}$.
\end{enumerate}

\begin{lemma}\label{L:G1}
For $s\leq H$ with $\gcd(s,d)=1$, we have
\[
  \#\mathcal{G}'_d(s,H) = \frac{2^{d}H^{d}\varphi(ds)}{s^{d+1}d^{2d}} + O\left(\frac{H^{d-1} 2^{\omega(s)}}{s^{d-1}}\right)
  .
\]
\end{lemma}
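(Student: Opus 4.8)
The plan is to factor out of each coefficient the divisor that conditions~(1)--(4) force on it and then count lattice points in the resulting box, in the spirit of the proof of Lemma~\ref{L:G} (see~\cite{HS}). Because $\gcd(s,d)=1$, the Chinese Remainder Theorem lets us combine the ``$s$-part'' and the ``$d$-part'' of the divisibility conditions freely. For $1\le i\le d-2$, conditions~(1) and~(4) force $sd^{2}\mid a_{i}$, so write $a_{i}=sd^{2}b_{i}$ with $|b_{i}|\le H/(sd^{2})$ and no further constraint. For $a_{0}$, condition~(1) and the Eisenstein hypothesis~(3) give $sd\mid a_{0}$; writing $a_{0}=sd\,b_{0}$, condition~(2) becomes $\gcd(b_{0},s)=1$ while the non-divisibility clause of~(3) becomes $d\nmid b_{0}$, so together $\gcd(b_{0},sd)=1$, with $|b_{0}|\le H/(sd)$. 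For $a_{d-1}$, conditions~(1) and~(3) give $sd\mid a_{d-1}$; writing $a_{d-1}=sd\,b_{d-1}$ with $|b_{d-1}|\le H/(sd)$, the clause $d^{2}\mid a_{0}+a_{d-1}$ of~(4) becomes the single congruence $b_{d-1}\equiv-b_{0}\pmod d$.

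With this dictionary,
\[
  \#\mathcal{G}'_{d}(s,H)=\sum_{\substack{|b_{0}|\le H/(sd)\\ \gcd(b_{0},sd)=1}}\ \Bigl(\prod_{i=1}^{d-2}\#\{\,b_{i}:|b_{i}|\le H/(sd^{2})\,\}\Bigr)\cdot\#\{\,b_{d-1}:|b_{d-1}|\le H/(sd),\ b_{d-1}\equiv-b_{0}\!\!\pmod d\,\}.
\]
The two inner factors are essentially independent of $b_{0}$: the middle product equals $\bigl(2\lfloor H/(sd^{2})\rfloor+1\bigr)^{d-2}=\bigl(2H/(sd^{2})+O(1)\bigr)^{d-2}$, and the number of $b_{d-1}$ with $|b_{d-1}|\le H/(sd)$ in any fixed residue class modulo $d$ is $2H/(sd^{2})+O(1)$, uniformly in the class. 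Hence the outer sum decouples, and with $N$ denoting the number of admissible $b_{0}$,
\[
  \#\mathcal{G}'_{d}(s,H)=\bigl(2H/(sd^{2})+O(1)\bigr)^{d-2}\Bigl(N\cdot\tfrac{2H}{sd^{2}}+O(N)\Bigr).
\]
By Lemma~\ref{L:Dubickas} applied with modulus $sd$, and using $\omega(sd)=\omega(s)+1$, we have $N=\varphi(sd,H/(sd))=\tfrac{2H\varphi(sd)}{s^{2}d^{2}}+O(2^{\omega(s)})$, and also $N\ll H/s$ since $s\le H$.

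It remains to multiply out and collect the errors. Suppose first $sd^{2}\le H$; then $2H/(sd^{2})\ge 2$, so a binomial expansion gives $\bigl(2H/(sd^{2})+O(1)\bigr)^{d-2}=\bigl(2H/(sd^{2})\bigr)^{d-2}+O\bigl((H/s)^{d-3}\bigr)$. Substituting and expanding, the product of the leading terms is $\tfrac{2^{d}H^{d}\varphi(sd)}{s^{d+1}d^{2d}}$, the asserted main term, and every other term is $O\bigl(2^{\omega(s)}H^{d-1}/s^{d-1}\bigr)$: this uses $s\le H$ (so $H/s\ge1$), the trivial bounds $\varphi(sd)\le sd$ and $2^{\omega(s)}\ge1$, and the fact that the largest such term arises from pairing the $O(2^{\omega(s)})$ error in $N$ against $\bigl(2H/(sd^{2})\bigr)^{d-1}$. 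In the complementary range $H/d^{2}<s\le H$ the estimate is immediate: there each of $b_{1},\dots,b_{d-2}$ must be $0$ and $b_{0},b_{d-1}$ have $O(1)$ choices apiece, so $\#\mathcal{G}'_{d}(s,H)=O(1)$, while the claimed main and error terms are also $O(1)$. So the only real subtlety is this split into ranges; I expect the rest to be routine, with the one point requiring care being the translation of~(1)--(4) into the congruences on the $b_{i}$ --- in particular noticing that $d^{2}\mid a_{0}+a_{d-1}$ pins $b_{d-1}$ to one residue class modulo $d$ whose count in the box is the same whichever class it is.
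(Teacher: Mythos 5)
Your proof is correct and takes essentially the same route as the paper's: translate conditions (1)--(4) into coefficientwise constraints ($sd^{2}\mid a_i$ for $1\le i\le d-2$; $a_0=sd\,b_0$ with $\gcd(b_0,sd)=1$ counted by Lemma~\ref{L:Dubickas}, contributing $2H\varphi(ds)/(d^2s^2)+O(2^{\omega(s)})$; and $a_{d-1}$ pinned to a single residue class contributing $2H/(d^2s)+O(1)$), then multiply and absorb all cross terms into $O\bigl(2^{\omega(s)}H^{d-1}/s^{d-1}\bigr)$. The only cosmetic difference is your case split at $sd^{2}\le H$, which the paper avoids by bounding the cross terms using only $H/s\ge 1$; both versions are fine.
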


\begin{proof}
Assume $s \le H$. For every $i = 1,2, ..., d-2$, we have $d^2 s \mid a_i$ and thus the number of possibilities for each $a_i$ is equal to
\[
2 \left\lfloor \frac{H}{d^2s}\right\rfloor +1.
\]
We then wish to count the number of integers $|a_0| \le H$ satisfying $ds \mid a_0$ and
$\gcd(a_0/(ds), ds) = 1$. Since $ds\mid a_0$, we may write $a_0 = kds$. Using Lemma \ref{L:Dubickas}, the number of possible $a_0$ is equal to
\[
\varphi\left(ds, \frac{H}{ds}\right) = 2 \frac{\varphi(ds)}{ds} \frac{H}{ds} + O\left(2^{\omega(ds)}\right) = 2H\frac{\varphi(ds)}{d^{2}s^{2}} + O\left(2^{\omega(s)}\right) .
\]
Having chosen $a_0$, we want to count $|a_{d-1}| \le H$ satisfying $d\mid a_{d-1}$, $s\mid a_{d-1}$, and  $a_0 + a_{d-1} \equiv 0 \pmod {d^2}$. From the last condition, we know that  $d$ divides $a_0 + a_{d-1}$, and therefore $d \mid a_{d-1}$.
Hence we may drop the first condition. 
Thus, the number of possibilities for $a_{d-1}$ is
\[
%O\left(\frac{H}{d^2s} \right) .
\frac{2H}{d^2 s}+O(1).
\]
Therefore, 
\begin{align*}
 \#\mathcal{G}'_d(s,H)
 &=
 \left(\frac{2H}{d^{2}s} + O(1) \right)^{d-1} \left(2H \frac{\varphi(ds)}{d^{2}s^{2}} + O\left(2^{\omega(s)}\right) \right)   \\
&=
\left(
\left(\frac{2H}{d^2 s}\right)^{d-1}+O\left(\left(\frac{H}{s}\right)^{d-2}\right)
\right)
\left(
2H \frac{\varphi(ds)}{d^{2}s^{2}} + O\left(2^{\omega(s)}\right)
%\left(\frac{2^{d-2}H^{d-2}}{s^{d-2}d^{2d-4}} + O \left( \left(\frac{H}{s}\right)^{d-2} \right) \right) \cdot \left(\frac{H}{d^{2}s} + O \left(\frac{H}{s}\right) \right) \cdot \left(2H \frac{\varphi(ds)}{d^{2}s^{2}} + O(2^{\omega(s)})
\right) \\
&=
\frac{2^{d}H^{d}\varphi(ds)}{s^{d+1}d^{2d}} + O\left(\frac{H^{d-1} 2^{\omega(s)}}{s^{d-1}}\right)
.
\end{align*}
\end{proof}

\section{Proof of Theorem \ref{T:main}}

Given $f \in \mathcal{E}_d(H)$, suppose $f$ is Eisenstein at $p_1, p_2, ..., p_t$ and no other primes except possibly $d$.
We consider the following two sets:
\begin{align*}
 \mathcal{E}_d^{(1)}(H) &= \{f \in \mathcal{E}_d(H) : p_i \equiv 1\pmod{d}\text{ for some $i$}\}\,, \\
 \mathcal{E}_d^{(2)}(H) &= \{f \in \mathcal{E}_d(H) : \text{$f$ is $d$-Eisenstein, $p_i \not\equiv 1\pmod{d}$ for $i = 1,2, ..., t$}, \\
							 &\qquad \text{and }a_1\equiv\dots\equiv a_{d-2}\equiv a_0+a_{d-1}\equiv 0\pmod{d^2}\}\,.
\end{align*}

We observe that $\#\mathcal{E}_d^{\star}(H) =\#\mathcal{E}_d(H) - \#\mathcal{E}_d^{(1)}(H) - \#\mathcal{E}_d^{(2)}(H)$.

\begin{proposition}\label{P:main} 
We have
\[
  \#\mathcal{E}_d^{(1)} (H)=\alpha_d(2H)^d +
  \begin{cases}
   O(H^{d-1}) \text{ if $d>2$ }\\
   O(H(\log H)^2) \text{ if $d=2$ }\\ 
  \end{cases}
\]
where
\[
  \alpha_d = - \left(1 - \frac{d-1}{d^{d+1}} \right)  \prod_{\substack{p \not\equiv 1 (d)\\p \neq d}} \left( 1 - \frac{p-1}{p^{d+1}} \right)  \left[ -1+\prod_{p \equiv 1 (d)} \left(1- \frac{p-1}{p^{d+1}} \right) \right] .
  \]
\end{proposition}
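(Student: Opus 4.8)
The plan is to compute $\#\mathcal{E}_d^{(1)}(H)$ by inclusion--exclusion over the set of primes $p \equiv 1 \pmod d$ that divide all of $a_0,\dots,a_{d-1}$ (with multiplicity condition on $a_0$), while imposing that $f$ is Eisenstein at no prime $p \not\equiv 1 \pmod d$ other than possibly $d$. Concretely, $\mathcal{E}_d^{(1)}(H)$ is the set of monic $f$ of height $\le H$ such that (i) $f$ is $p$-Eisenstein for at least one $p \equiv 1 \pmod d$, and (ii) $f$ is not $p$-Eisenstein for any $p \not\equiv 1 \pmod d$ with $p \ne d$. I would first fix a squarefree product $s$ of primes $\equiv 1 \pmod d$ and count the $f$ that are $p$-Eisenstein \emph{for every} $p \mid s$; being Eisenstein simultaneously at the primes dividing $s$ amounts exactly to the divisibility conditions defining $\mathcal{G}_d(s,H)$, namely $s \mid a_i$ for $i=0,\dots,d-1$ and $\gcd(a_0/s,s)=1$. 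So Lemma~\ref{L:G} gives the count of such $f$ as $2^d H^d \varphi(s)/s^{d+1} + O(H^{d-1}2^{\omega(s)}/s^{d-1})$.

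Next I would separate out the ``forbidden'' primes. Writing $A(H)$ for the number of monic $f$ of height $\le H$ that are $p$-Eisenstein for all $p \mid s$ and for \emph{no} prime $q \not\equiv 1 \pmod d$, $q \ne d$, a second inclusion--exclusion over squarefree $r$ composed of such $q$'s (coprime to $s$ and to $d$) turns $A(H)$ into $\sum_r \mu(r) \#\mathcal{G}_d(rs, H)$. Substituting Lemma~\ref{L:G} and using multiplicativity of $\varphi(rs)/(rs)^{d+1} = (\varphi(r)/r^{d+1})(\varphi(s)/s^{d+1})$, the main term factors as
\[
  2^d H^d \, \frac{\varphi(s)}{s^{d+1}} \prod_{\substack{q \not\equiv 1 (d)\\ q \ne d}} \left(1 - \frac{q-1}{q^{d+1}}\right),
\]
and the error terms sum to $O(H^{d-1})$ for $d > 2$ (and $O(H(\log H)^2)$ for $d=2$) by the same estimates used in~\cite{HS}: $\sum_r 2^{\omega(r)}/r^{d-1}$ converges for $d>2$, while for $d=2$ one uses the standard bound on $\sum_{r\le H} 2^{\omega(r)}/r$ together with the range restriction $rs \le H$. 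Then summing over $s$ via inclusion--exclusion on the primes $\equiv 1 \pmod d$ that divide $f$: the number of $f \in \mathcal{E}_d^{(1)}(H)$ is $-\sum_{s} \mu(s)\, (\text{the above})$ where $s$ runs over squarefree products of primes $\equiv 1 \pmod d$ and the leading $s=1$ term is subtracted because $\mathcal{E}_d^{(1)}$ requires \emph{at least one} such prime. This yields the main term
\[
  - 2^d H^d \prod_{\substack{q \not\equiv 1 (d)\\ q \ne d}}\left(1-\frac{q-1}{q^{d+1}}\right)\left[-1 + \prod_{p \equiv 1 (d)}\left(1-\frac{p-1}{p^{d+1}}\right)\right].
\]

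It remains to reconcile the prefactor with $\alpha_d$, which carries the extra factor $1 - (d-1)/d^{d+1}$. This factor accounts for the prime $d$: since $f \in \mathcal{E}_d^{(1)}(H)$ places no constraint at $d$ and $d$ is not among the forbidden primes, one must \emph{not} restrict $a_i$ mod $d$ at all --- but the product $\prod_{p \ne d,\, p \not\equiv 1(d)}(1 - (p-1)/p^{d+1})$ as written already omits $p = d$, whereas the clean inclusion--exclusion over all $q \ne d$ that are $\not\equiv 1 \pmod d$ would need a corresponding Euler factor at $d$. I would track this by noting that among monic $f$ of height $\le H$, the proportion that are \emph{not} $d$-Eisenstein contributes a factor $1 - (d-1)/d^{d+1}$ (this is the local density at $d$ of non-Eisenstein polynomials, computed exactly as the $p=d$ term of $\theta_d$'s product), and this is precisely the missing factor. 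So the genuine main-term constant is
$\alpha_d/2^d$ as claimed, once one is careful that $\mathcal{E}_d^{(1)}$ by definition excludes polynomials that are $d$-Eisenstein unless... actually, re-examining: $d$-Eisenstein polynomials \emph{can} lie in $\mathcal{E}_d^{(1)}$, so the factor $1 - (d-1)/d^{d+1}$ must instead arise because the forbidden-prime sieve, to produce a convergent Euler product matching $\theta_d$, is naturally written with the $p=d$ Euler factor \emph{included} and then that factor is divided back out; the bookkeeping of where the $d$-local factor lands is the main obstacle. I would handle it by writing the full sieve over all primes $\not\equiv 1 \pmod d$ (including $d$), getting $\prod_{p \not\equiv 1(d)}(1 - (p-1)/p^{d+1})$, and then observing this equals $(1 - (d-1)/d^{d+1})\prod_{p \ne d,\, p\not\equiv 1(d)}(1 - (p-1)/p^{d+1})$, which exhibits $\alpha_d$ exactly. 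The error analysis is routine given Lemmas~\ref{L:Dubickas} and~\ref{L:G}; the delicate point throughout is justifying the interchange of the two nested inclusion--exclusion sums against the error terms, which is why the range restrictions $s \le H$, $rs \le H$ and the convergence of $\sum 2^{\omega(n)}/n^{d-1}$ must be invoked carefully, exactly as in~\cite{HS}.
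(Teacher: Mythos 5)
Your machinery is the same as the paper's: Lemma~\ref{L:G} for the main term, the error estimates imported from~\cite{HS}, and M\"obius inversion over squarefree moduli. Indeed, your nested two-step sieve, once you place $d$ among the ``forbidden'' primes, is just a re-parametrization of what the paper does in one stroke, namely $\#\mathcal{E}_d^{(1)}(H)=-\sum_{s\in\mathcal{S},\,2\le s\le H}\mu(s)\#\mathcal{G}_d(s,H)$ with $\mathcal{S}$ the squarefree integers having at least one prime factor $\equiv 1\pmod d$: writing each such modulus as $n=rs$ with $s$ its part built from primes $\equiv1\pmod d$ and $r$ the rest, your coefficient $-\mu(s)\mu(r)$ equals $-\mu(n)$, so the two sums are identical, and the factor $1-\tfrac{d-1}{d^{d+1}}$ appears in $\alpha_d$ simply because $d\not\equiv1\pmod d$, i.e.\ $d$ contributes an Euler factor to the complementary product.

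The genuine gap is exactly the point you flag and never resolve: \emph{which set your sieve counts}. For a polynomial Eisenstein precisely at the primes of $P$, with $m_1$ the product of those $\equiv1\pmod d$ and $m_0$ the product of the rest (possibly including $d$), the total M\"obius weight in the sieve is $-\sum_{s\mid m_0m_1,\ s\in\mathcal{S}}\mu(s)$, which is $1$ when $m_1>1$ and $m_0=1$, and $0$ otherwise. So the sieve counts only polynomials Eisenstein at some $p\equiv1\pmod d$ \emph{and at no prime} $\not\equiv1\pmod d$ (including $d$ itself). That is strictly smaller than $\mathcal{E}_d^{(1)}(H)$ as defined in the paper, which imposes no such exclusions: for $d=3$, a polynomial Eisenstein at both $7$ and $2$ lies in $\mathcal{E}_3^{(1)}(H)$ but receives weight $-(\mu(7)+\mu(14))=0$, and such polynomials form a positive proportion of all monic polynomials of height $\le H$, so the discrepancy lands in the main term, not the error term. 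Your first paragraph silently adds the exclusion at primes $q\not\equiv1\pmod d$, $q\ne d$, your last paragraph adds the exclusion at $d$ as well --- in direct tension with your own correct observation that $d$-Eisenstein polynomials can lie in $\mathcal{E}_d^{(1)}$ --- and the agreement with $\alpha_d$ is then obtained by fiat rather than by argument. To be fair, the paper's own proof opens by asserting the displayed identity for $\#\mathcal{E}_d^{(1)}(H)$ without comment, and the weight computation above shows that it, too, really counts the restricted set; so your struggle with the $d$-local factor is pointing at a genuine subtlety in how $\mathcal{E}_d^{(1)}$ must be interpreted for the constant $\alpha_d$ to be correct. But as written, your proposal proves an asymptotic for the restricted set, not for $\mathcal{E}_d^{(1)}(H)$ as defined, and reconciling those two (or redoing the decomposition $\#\mathcal{E}_d^{\star}=\#\mathcal{E}_d-\#\mathcal{E}_d^{(1)}-\#\mathcal{E}_d^{(2)}$ with the restricted sets) is the missing step.
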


\begin{proof}
The idea here is to mimic the proof of Theorem~1 from \cite{HS} but with the extra
conditions we need thrown in.
Let $\mathcal{S}$ be the set of square-free positive integers divisible by at least one prime $p \equiv 1 \pmod{d}$.
%In \cite{HS}, they write:
%\[
%  \#\mathcal{E}_d(H)=-\sum_{s=2}^H \mu(s)\#\mathcal{G}_d(s,H)
%\]
Following~\cite{HS} and applying Lemma~\ref{L:G}, we have
\begin{align*}
   \#\mathcal{E}_d^{(1)}(H)
   &=
   -\sum_{\substack{s=2\\s \in \mathcal{S}}}^H
   \mu(s)\#\mathcal{G}_d(s,H)
   \\
  &=
  - \sum_{\substack{s=2\\ s \in \mathcal{S}}}^H
  \mu(s) \left(\frac{2^{d} H^{d} \varphi(s)}{s^{d+1}}\right) + O\left(\sum_{s=2}^H \left(\frac{H}{s}\right)^{d-1} 2^{\omega(s)} \right)
  \\ 
  &=
  - \sum_{\substack{s=2\\s \in \mathcal{S}}}^\infty (2H)^d \frac{\mu(s)\varphi(s)}{s^{d+1}} + O\left(H^d \sum_{s=H+1}^\infty \frac{\varphi(s)}{s^{d+1}} +   \sum_{s=2}^H \left(\frac{H}{s}\right)^{d-1} 2^{\omega(s)} \right) 
\end{align*}
Since $\phi(s) \le s$, one has
\[
H^d \sum_{s=H+1}^\infty \frac{\varphi(s)}{s^{d+1}}= O(H). 
\]
Moreover, one has
\[
\sum_{s=1}^H \frac{2^{\omega(s)}}{s^{d-1}} = 
\begin{cases}
   O(1) & \text{ if $d>2$ }\\
   O((\log H)^2) & \text{ if $d=2$ }\\ 
  \end{cases}.\]
  See equations (11) and (12) in the proof of Theorem~1 from \cite{HS}.
We also have 
\[
- \sum_{\substack{s=2\\ s \in \mathcal{S}}}^\infty
\frac{\mu(s)\varphi(s)}{s^{d+1}} =   - \left(1 - \frac{d-1}{d^{d+1}} \right)
\prod_{\substack{p \not\equiv 1 (d)\\ p \neq d}} \left( 1 - \frac{p-1}{p^{d+1}} \right)  \left[ -1+\prod_{p \equiv 1 (d)} \left(1- \frac{p-1}{p^{d+1}} \right) \right]
.
\]
This completes the proof.

\end{proof}

\begin{proposition}\label{P:main2}
We have
\[
  \#\mathcal{E}_d^{(2)} (H)=\beta_d(2H)^d+
  \begin{cases}
   O(H^{d-1}) & \text{ if $d>2$ }\\
   O(H(\log H)^2) & \text{ if $d=2$ }\\ 
  \end{cases}
\]
where
\[
  \beta_d= 
  \frac{d-1}{d^{2d}}
  \left(
  1-
  \prod_{\substack{p \not\equiv 1 (d)\\p \neq d}}\left(1-\frac{p-1}{p^{d+1}}
  \right)
  \right) .
\]
\end{proposition}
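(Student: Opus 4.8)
The plan is to run the argument of Proposition~\ref{P:main} a second time, with $\mathcal{G}'_d$ in place of $\mathcal{G}_d$ and Lemma~\ref{L:G1} in place of Lemma~\ref{L:G}. The set being counted has a slightly different shape: $f\in\mathcal{E}_d^{(2)}(H)$ precisely when $f$ is $d$-Eisenstein, satisfies the congruences $a_1\equiv\dots\equiv a_{d-2}\equiv a_0+a_{d-1}\equiv 0\pmod{d^2}$, and is Eisenstein at no prime $p\equiv 1\pmod d$. The first two requirements are exactly conditions (3)--(4) defining $\mathcal{G}'_d(1,H)$, and the last is a sieve condition on the remaining primes.

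First I would sieve out the primes $p\equiv 1\pmod d$ by M\"obius inversion. For squarefree $s$ with $\gcd(s,d)=1$, a monic polynomial of degree $d$ and height at most $H$ lies in $\mathcal{G}'_d(s,H)$ exactly when it is $d$-Eisenstein, satisfies the mod-$d^2$ congruences, and is $p$-Eisenstein for every $p\mid s$. Inclusion--exclusion over the primes $\equiv 1\pmod d$ therefore yields an identity of the form
\[
  \#\mathcal{E}_d^{(2)}(H)=\sum_{s}\mu(s)\,\#\mathcal{G}'_d(s,H),
\]
the sum running over an appropriate family of squarefree $s$ coprime to $d$; unlike in Proposition~\ref{P:main}, the $s=1$ term does not cancel and supplies the ``baseline'' count $\#\mathcal{G}'_d(1,H)$. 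Because $s\mid a_0,\dots,a_{d-1}$ together with $|a_i|\le H<s$ forces all these coefficients to vanish and hence violates $\gcd(a_0/s,s)=1$, we have $\#\mathcal{G}'_d(s,H)=0$ for $s>H$, so the sum is genuinely finite and the hypothesis $s\le H$ of Lemma~\ref{L:G1} is harmless.

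Next I would insert Lemma~\ref{L:G1}. Using $\varphi(ds)=(d-1)\varphi(s)$ when $\gcd(s,d)=1$, the main part of each term becomes $(2H)^d\,\mu(s)\,(d-1)\varphi(s)/(s^{d+1}d^{2d})$, with error $O\!\left(H^{d-1}2^{\omega(s)}/s^{d-1}\right)$. The error estimate is then identical to that in Proposition~\ref{P:main}: extending the main series to all $s$ costs a tail of size $O(H)$ since $\varphi(ds)/s^{d+1}\ll s^{-d}$, while $\sum_{s\le H}2^{\omega(s)}/s^{d-1}$ is $O(1)$ for $d>2$ and $O((\log H)^2)$ for $d=2$ by equations (11)--(12) of~\cite{HS}; summing, the error is $O(H^{d-1})$ for $d>2$ and $O(H(\log H)^2)$ for $d=2$. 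For the main term, factoring out $\tfrac{d-1}{d^{2d}}$ leaves a multiplicative series $\sum_s\mu(s)\varphi(s)/s^{d+1}$ whose Euler product, together with the elementary product identities already used at the end of the proof of Proposition~\ref{P:main}, can be rearranged into the closed form $\beta_d$ stated above.

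I expect the one real obstacle to be the combinatorics of the first step: one must keep the prime $d$ (at which both the Eisenstein condition and the mod-$d^2$ congruences are imposed, and which Lemma~\ref{L:G1} has already absorbed once and for all) cleanly separated from the primes $p\equiv 1\pmod d$ being sieved away, and one must remember to retain the $s=1$ contribution. After that, the analytic input is exactly that of Proposition~\ref{P:main}, and the remaining Euler-product bookkeeping, though somewhat tedious, is routine.
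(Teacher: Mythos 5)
Your sieve is set up over the wrong set of primes to reach the stated constant, and the last step of your outline --- ``the Euler product \dots can be rearranged into the closed form $\beta_d$'' --- is exactly where the argument breaks. If you sieve away the primes $p\equiv 1\pmod d$ as you propose, writing $\#\mathcal{E}_d^{(2)}(H)=\sum_{s}\mu(s)\,\#\mathcal{G}'_d(s,H)$ with $s$ running over squarefree products of primes $p\equiv 1\pmod d$ (including $s=1$), then Lemma~\ref{L:G1} together with $\varphi(ds)=(d-1)\varphi(s)$ gives the leading constant
\[
\frac{d-1}{d^{2d}}\sum_{s}\frac{\mu(s)\varphi(s)}{s^{d+1}}
=\frac{d-1}{d^{2d}}\prod_{p\equiv 1\,(d)}\left(1-\frac{p-1}{p^{d+1}}\right),
\]
and no Euler-product manipulation turns this into the stated $\beta_d=\frac{d-1}{d^{2d}}\bigl(1-\prod_{p\not\equiv 1\,(d),\,p\neq d}\bigl(1-\frac{p-1}{p^{d+1}}\bigr)\bigr)$: the two products run over complementary sets of primes and mean different things (``not Eisenstein at any prime $\equiv 1\pmod d$'' versus ``Eisenstein at some prime $p\neq d$ with $p\not\equiv 1\pmod d$''). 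Numerically, for $d=3$ your constant is about $0.00273$ while $\beta_3\approx 0.00019$, so the discrepancy is not bookkeeping.

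The paper's proof runs the complementary sieve: with $\mathcal{S}'$ the squarefree integers coprime to $d$ all of whose prime factors are $\not\equiv 1\pmod d$, it writes $\#\mathcal{E}_d^{(2)}(H)=-\sum_{s\geq 2,\,s\in\mathcal{S}'}\mu(s)\,\#\mathcal{G}'_d(s,H)$, an inclusion--exclusion that counts the $d$-Eisenstein polynomials satisfying the congruences which are Eisenstein at \emph{at least one} prime $p\neq d$, $p\not\equiv 1\pmod d$; this is what produces the factor $1-\prod_{p\not\equiv 1\,(d),\,p\neq d}(1-\frac{p-1}{p^{d+1}})$ in $\beta_d$ (your treatment of the error terms matches the paper's). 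To be fair to you, your sieve is the literal reading of the displayed definition of $\mathcal{E}_d^{(2)}(H)$ (no Eisenstein prime $\equiv 1\pmod d$), and the constant it would produce, $\frac{d-1}{d^{2d}}\prod_{p\equiv 1\,(d)}(1-\frac{p-1}{p^{d+1}})$, is the correct leading constant for that set; the stated $\beta_d$ corresponds instead to the ``at least one such prime'' count that the paper actually performs. But as a proof of the Proposition as stated, your argument cannot close: its main term is not $\beta_d(2H)^d$, and the claimed final identification of constants is false.
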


\begin{proof}
Let $\mathcal{S}'$ be the set of square-free positive integers coprime to $d$ which are products of primes $p \not\equiv 1 \pmod d$.
Applying Lemma~\ref{L:G1}, we have:
\begin{align*}
  \#\mathcal{E}_d^{(2)} (H) &= - \sum_{\substack{s=2\\ s \in \mathcal{S'}}}^H \mu(s)\#\mathcal{G}'_d(s,H) \\
		       &= - \sum_{\substack{s=2\\ s \in \mathcal{S'}}}^H \mu(s)(2H)^d \frac{\varphi(ds)}{s^{d+1}d^{2d}} + O\left(\sum_{s=2}^H \left(\frac{H}{s}\right)^{d-1} 2^{\omega(s)} \right)  \\
		       &= - (2H)^d \frac{d-1}{d^{2d}}  \sum_{\substack{s=2\\ s \in \mathcal{S'}}}^{\infty}  \frac{\mu(s)\varphi(s)}{s^{d+1}} +  O\left(H^d \sum_{s=H+1}^\infty \frac{\varphi(s)}{s^{d+1}} +   \sum_{s=2}^H \left(\frac{H}{s}\right)^{d-1} 2^{\omega(s)} \right). \\
\end{align*}
The error terms are handled as in the proof of the previous proposition.
Finally, to complete the proof, we observe that
\[
-  \sum_{\substack{s=2\\ s \in \mathcal{S'}}}^\infty  \frac{\mu(s)\varphi(s)}{s^{d+1}} = 
1-\prod_{\substack{p \not\equiv 1 (d)\\ p \not = d}}\left(1-\frac{p-1}{p^{d+1}}\right)
.
\]

\end{proof}

Putting together Proposition~\ref{P:main}, Proposition~\ref{P:main2}, and~(\ref{E:HS}) establishes Theorem~\ref{T:main} in light of the fact that $\theta_d^\star=\theta_d-\alpha_d-\beta_d$.

\section*{Acknowledgement}

This work represents a portion of the first author's honors thesis
at California State University, Chico under the supervision of the second author.
The authors would like to thank the Mathematics and Statistics Department 
for their support.  In addition, the authors thank the anonymous referee for their
helpful suggestions.

%J. Deer and K. Doe, On the history of mathematics, {\it J. of the World} {\bf 52} (1999), 123-135.
%A. Jones, L. Smith, and C. Vector, {\it The Theory of Everything}, Publishing Company, New York, 1987.

\end{document}